\newtheorem{thm}{Theorem}[section]
\def\C{\mathbb C}
\def\dim{\operatorname{dim}}
\newtheorem{cor}[thm]{Corollary}
\newtheorem{teo}[thm]{Theorem}
\newtheorem{lem}[thm]{Lemma}
\theoremstyle{definition}
\newtheorem{ex}[thm]{Example}
\newtheorem{defi}[thm]{Definition}
\newtheorem{remark}[thm]{Remark}
\def\C{\mathbb C}
\def\dim{\operatorname{dim}}
\def\Aut{\operatorname{Aut}}
\begin{document}
	\title{Families of ICIS with constant total Milnor number}
	
	\author{R. S. Carvalho, J. J. Nuño-Ballesteros, B. Oréfice-Okamoto, J. N. Tomazella}
	
	\address{Departamento de Matem\'atica, Universidade Federal de S\~ao Carlos, Caixa Postal 676,
		13560-905, S\~ao Carlos, SP, BRAZIL}
	
	\email{rafaelasoares@dm.ufscar.br}
	
	\address{Departament de Matemàtiques, Universitat de València, Campus de Burjassot, 46100 Burjassot SPAIN}
	\address{Departamento de Matemática, Universidade Federal da Paraíba
		CEP 58051-900, João Pessoa - PB, Brazil}
	
	\email{Juan.Nuno@uv.es}
	
	\address{Departamento de Matem\'atica, Universidade Federal de S\~ao Carlos, Caixa Postal 676,
		13560-905, S\~ao Carlos, SP, BRAZIL}
	
	\email{brunaorefice@ufscar.br}
	
	\address{Departamento de Matem\'atica, Universidade Federal de S\~ao Carlos, Caixa Postal 676,
		13560-905, S\~ao Carlos, SP, BRAZIL}
	
	\email{jntomazella@ufscar.br}
	
	\thanks{The first author was partially supported by CAPES.
		The second author was partially supported by MICINN Grant PGC2018--094889--B--I00 and by GVA Grant AICO/2019/024 The third author was partially supported by FAPESP Grant
		2016/25730-0. The fourth author was partially supported by CNPq Grant
		309086/2017-5 and FAPESP Grant 2018/22090-5.}
		
		\subjclass[2010]{Primary 32S40; Secondary 32S50, 32S55} \keywords{Isolated complete intersection singularities, monodromy, Milnor fibration, coalescence of singularities}

\begin{abstract}
We show that a family of isolated complete intersection singularities (ICIS) with constant total Milnor number has no coalescence of singularities. This extends a well known result of Gabrielov, Lazzeri and Lê for hypersurfaces. We use A'Campo's theorem to see that the Lefschetz number of the generic monodromy of the ICIS is zero when the ICIS is singular. We give a pair applications for families of functions on ICIS which extend also some known results for functions on a smooth variety.
\end{abstract}

\maketitle

\section{Introduction}

A well known theorem proved independently  by Gabrielov, Lazzeri and Lê (see \cite{Gabrielov, Lazzeri, Le}) ensures that a family of hypersurfaces with constant total Milnor number has no coalescence of singularities. Also, Buchweitz and Greuel, in \cite{Buchweitz and Greuel}, present an example which shows that this is not true for families of reduced curves in general and they ask if it there is such an example for a family of isolated complete intersection singularities (ICIS). In this work, we answer negatively the Buchweitz and Greuel question.

More specifically, we consider an ICIS $(X,0)$ defined as the zero locus of a reduced holomorphic mapping $\phi:(\C^n,0)\to(\C^p,0)$. A deformation of $(X,0)$ is given by another holomorphic mapping $\Phi\colon(\C\times\C^n,0)\to (\C^p,0)$ such that $\Phi(0,x)=\phi(x)$ for all $x$. We fix a representative $\Phi:D\times W\to\C^p$, where $D$ and $W$ are open neighbourhoods of the origin in $\C$ and $\C^n$ respectively. For each $t\in D$, we
put $\phi_t(x):=\Phi(t,x)$ and $X_t:=\phi_t^{-1}(0)$ and call $(X_t)_{t\in D}$ a family of ICIS.

Let $B_\epsilon\subset W$ be a  Milnor ball for $X$ at $0$, that is, $0$ is the only singular point of $X$ on $B_\epsilon$ and for all $\epsilon'$ with $0<\epsilon'\le \epsilon$, $\partial B_{\epsilon'}$ is transverse to $X$. After shrinking $D$ if necessary, we can assume that $\partial B_\epsilon$ is also transverse to $X_t$, for all $t\in D$. This forces that $X_t$ has a finite number of singular points $z_1(t),\dots,z_k(t)$ contained in $B_\epsilon$. The \emph{total Milnor number} of $X_t$ is defined as
\[
\mu_t:=\sum_{i=1}^{k_t} \mu(X_t, z_i(t)),
\]
where $\mu(X_t; z_i(t))$ is the Milnor number of $X_t$ at $z_i(t)$. We say that the family has \emph{constant total Milnor number} if $\mu_t=\mu_0$, for all $t\in D$. Our main result is that a family of ICIS with constant total Milnor number has no coalescence of singularities as follows:

\begin{teo}\label{main} Let $(X_t)_{t\in D}$ be a family of ICIS with constant total Milnor number. Then $X_t$ has a unique singular point $z_1(t)$ on $B_\epsilon$ and $\mu(X_t,z_1(t))=\mu(X,0)$, for all $t\in D$.
\end{teo}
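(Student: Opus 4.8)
The plan is to compute the Lefschetz number of the (generic) geometric monodromy of the Milnor fibre of $X_t$ in two different ways and then compare them. Write $d=n-p$ for the dimension of the fibres and let $F_t:=\phi_t^{-1}(y)\cap B_\epsilon$ be the Milnor fibre of $X_t$ over the ball, for a generic $y$ with $0<|y|\ll\epsilon$; by Hamm's theorem each $F_t$ is $(d-1)$-connected, so it is homotopy equivalent to a bouquet of $d$-spheres and its reduced homology is concentrated in degree $d$. Let $h_t\colon F_t\to F_t$ denote the associated monodromy, and recall that for a singular ICIS A'Campo's theorem gives that the Lefschetz number of its generic monodromy vanishes, $\Lambda(h)=0$.

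First I would localise the monodromy. For $t\neq0$ the fibre $F_t$ smooths the finitely many singular points $z_1(t),\dots,z_{k_t}(t)$ of $X_t$, and since these points are isolated the vanishing homology splits as $\widetilde H_d(F_t)=\bigoplus_{i=1}^{k_t}V_i$, where $V_i$ is the local vanishing lattice at $z_i(t)$, of rank $\mu(X_t,z_i(t))$. The key point is that the monodromy respects this splitting and restricts on each $V_i$ to the local generic monodromy $h_i^t$ at $z_i(t)$. Granting this, A'Campo's theorem applied at each (singular) point $z_i(t)$ gives $1+(-1)^d\operatorname{tr}(h_i^t|_{V_i})=0$, hence $(-1)^d\operatorname{tr}(h_i^t|_{V_i})=-1$, and summing over $i$ together with the contribution of $H_0(F_t)=\Z$ yields
\[
\Lambda(h_t)=1+(-1)^d\sum_{i=1}^{k_t}\operatorname{tr}\bigl(h_i^t|_{V_i}\bigr)=1-k_t.
\]

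Second, I would use the constancy of the total Milnor number to conserve the monodromy. Since $\mu_t=\mu_0$ for all $t$, the Euler characteristics $\chi(F_t)=1+(-1)^d\mu_t$ are constant, no vanishing cycles escape from the zero fibre, and the Milnor fibrations of $X_t$ and of $X_0$ are equivalent; in particular $F_t$ is diffeomorphic to $F_0$ and the monodromies $h_t$ and $h_0$ are conjugate, so that $\Lambda(h_t)=\Lambda(h_0)$. As $X_0=X$ is singular, A'Campo's theorem gives $\Lambda(h_0)=0$. Comparing with the previous display forces $1-k_t=0$, that is $k_t=1$, so $X_t$ has a unique singular point $z_1(t)$; and then $\mu(X_t,z_1(t))=\mu_t=\mu_0=\mu(X,0)$, which is the assertion.

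Two points will need genuine justification, and I expect them to be the main obstacle. On the one hand, one must check that the global monodromy $h_t$ is block diagonal with respect to $\widetilde H_d(F_t)=\bigoplus_i V_i$ and equals the local monodromy on each block; this requires seeing that, for a sufficiently small generic loop around $0$ in the target, the vanishing cycles are genuinely localised at the points $z_i(t)$ and are neither braided nor linked by the smooth part of $X_t$, so that the tubes joining the local fibres contribute nothing to $\widetilde H_d$ and nothing to the trace. On the other hand, the conservation $\Lambda(h_t)=\Lambda(h_0)$ must be deduced carefully from the constancy of the total Milnor number, controlling the representative so that $\partial B_\epsilon$ stays transverse and no extra critical value enters the relevant polydisc as $t$ varies; this is precisely where the hypothesis is used, and it is what upgrades the mere diffeomorphism type of the Milnor fibre, which by itself cannot bound the number of singular points, into a statement that, through A'Campo's vanishing, actually counts them.
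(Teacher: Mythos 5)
Your overall strategy --- compute the Lefschetz number of the generic monodromy twice, once localised at the singular points of $X_t$ and once globally, and let A'Campo's vanishing do the counting --- is exactly the strategy of the paper. However, the two points you defer are not peripheral technicalities: one of them is where the hypothesis actually does its work, and as stated your version of it is incorrect. The assertion that ``since these points are isolated the vanishing homology splits as $\widetilde H_d(F_t)=\bigoplus_i V_i$'' is false for a general family: $\widetilde H_d(F_t)$ always has rank $\mu(X,0)$ (the global fibre over a small generic value is diffeomorphic to the Milnor fibre of $(X,0)$ for all small $t$, whether or not $\mu_t$ is constant, by local triviality over the parameter), while $\bigoplus_i V_i$ has rank $\sum_i\mu(X_t,z_i(t))$, which in general is strictly smaller; and even when the ranks agree one must still rule out contributions from the collar $F_t\cap(B\smallsetminus\bigcup_i\mathring B_i)$ and from the overlaps. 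Symmetrically, the step where you say the hypothesis is used --- $\Lambda(h_t)=\Lambda(h_0)$ --- in fact holds for \emph{any} family (it only needs transversality to $\partial B_\epsilon$ and the connectedness of the complement of the discriminant of $(t,x)\mapsto(t,\phi_t(x))$). So as written the constancy of the total Milnor number never genuinely enters your argument, which means it cannot be complete: without that hypothesis the conclusion is false.

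The paper closes precisely this gap with a Mayer--Vietoris argument that avoids asserting any direct-sum decomposition. Setting $U=F_t\cap\bigcup_iB_i$ and $V=F_t\cap(B\smallsetminus\bigcup_i\mathring B_i)$, the relevant portion of the Mayer--Vietoris sequence is exact with extra terms $H_d(U\cap V)$, $H_d(V)$, $H_{d-1}(U\cap V)$, $H_{d-1}(V)$ of dimensions $a_4,a_3,a_2,a_1$. The alternating sum of dimensions gives $a_1-a_2+\mu(X,0)-\sum_i\mu(X_t,z_i(t))-a_3+a_4=0$, and it is exactly the hypothesis $\mu(X,0)=\sum_i\mu(X_t,z_i(t))$ that forces $a_1-a_2-a_3+a_4=0$. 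The monodromy acts compatibly on the whole sequence, as the identity on the collar terms; since the alternating sum of traces over an exact sequence of endomorphisms vanishes, the identity contributions cancel against $a_1-a_2-a_3+a_4=0$ and one is left with $\operatorname{tr}(h^*)=\sum_i\operatorname{tr}(h_i^*)$. Only then does A'Campo's theorem (each of these traces equals $(-1)^{d+1}$) force $k_t=1$. If you prefer to keep your formulation, you must prove that $\bigoplus_iV_i\to\widetilde H_d(F_t)$ is an isomorphism commuting with the monodromies; you will find that this is equivalent to the cancellation above, i.e.\ it is the $\mu$-constancy, not the isolatedness of the $z_i(t)$, that makes it true.
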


As we mentioned at the beginning, the hypersurface case of Theorem \ref{main} ($p=1$) gives the result of Gabrielov, Lazzieri and Lê in \cite{Gabrielov, Lazzeri, Le}. Here we follow the proof of Lê in \cite{Le}, which is based on A'Campo's theorem that the Lefschetz number of the monodromy of a hypersurface with an isolated singular point is zero \cite{A'campo}. In fact, A'Campo's theorem is more general and can be used for the monodromy of any holomorphic function $f:(X,0)\to(\C,0)$ on a complex analytic variety $X$ such that $f\in\mathfrak m_{X,0}^2$, where we denote by $\mathfrak m_{X,0}$ the maximal ideal of the local ring $\mathcal O_{X,0}$. A recent paper \cite{GLN} also shows that in these conditions, the geometric monodromy can be constructed with no fixed points, which is even stronger than A'Campo's theorem.

When $(X,0)$ is an ICIS of dimension $d$, we can choose generic coordinates in $\C^p$ in such a way that the zero locus $(X_1,0)$ of $\phi_1,\dots,\phi_{p-1}$ is also an ICIS of dimension $d+1$. We can imitate the hypersurface case by seeing $(X,0)$ as the special fibre of the restriction $\phi_p:(X_1,0)\to(\C,0)$. We show that the monodromy of $\phi_p:(X_1,0)\to(\C,0)$ is independent of the choice of the generic coordinates and call it the \emph{generic monodromy} of $(X,0)$. Moreover, we also show that if $0$ is a singular point of $X$, then we can choose the generic coordinates such that $\phi_p\in\mathfrak m_{X_1,0}^2$, so we can use A'Campo's theorem in this situation.

In the last section, we give two applications of Theorem \ref{main} for families of functions $f_t:(X,0)\to(\C,0)$, $t\in D$, on a fixed ICIS $(X,0)$. We denote by 
$S(f_t)$ the set of critical points of $f_t$ and by $S(Y_t)$ the set of singular points of the fibre $Y_t:=f_t^{-1}(0)$. Obviously, $S(Y_t)$ is contained in $S(f_t)$, but $S(f_t)$ may have critical points contained in fibres different from $Y_t$. We show that if $S(f_t)=S(Y_t)$, then $S(f_t)=\{0\}$. Again this result appears in the papers of Gabrielov, Lazzeri and Lê \cite{Gabrielov, Lazzeri, Le} when $X=\C^n$.

For the second application we consider
$J=J(f_t,\phi)$ the relative Jacobian ideal, that is, the ideal in $\mathcal O_{\C\times X,0}$ generated the maximal minors of the Jacobian matrix of $(f_t,\phi)$ with respect to the variables $x_1,\dots,x_n$ in $\C^n$ (that is, we do not consider here derivatives with respect to $t$). The zero locus of $J$ in $(\C\times X,0)$ is the set germ of points $(t,x)$ such that $x\in S(f_t)$. We show that if $\partial F/\partial t$ belongs to the radical of $J$ in $\mathcal O_{\C\times X,0}$, with $F(x,t)=f_t(x)$, then the zero locus of $J$ is equal to $(\C\times\{0\},0)$. This result completes some of the equivalences considered in a previous paper \cite{Rafaela Bruna e Tomazella}  about families of functions on ICIS. Such equivalences where showed by Greuel in \cite{Greuel} again in the case $X=\C^n$.

\section{The generic monodromy of an ICIS}

We recall some basic facts about the monodromy of an ICIS.
Let $\phi\colon(\mathbb{C}^n,0)\to
(\mathbb{C}^p,0)$ be a holomorphic map germ such that the germ defined by its zero set, $(X,0)\subset(\mathbb{C}^n,0)$, is an ICIS of dimension $d$.  We denote by $\Sigma$ the set germ of critical points of $\phi$ and by $\Delta=\phi(\Sigma)$ its discriminant. It is well known that $\Delta$ is a hypersurface in $(\C^p,0)$ (see for instance \cite{looijenga}). We choose $0<\delta\ll \epsilon\ll 1$ such that the restriction
\begin{equation}\label{fibration}
\phi: B_\epsilon\cap \phi^{-1}(\mathring B_\delta\smallsetminus \Delta)\to \mathring B_\delta\smallsetminus \Delta
\end{equation}
is a proper submersion, where $B_\epsilon$ is the closed ball of radius $\epsilon$ centered at the origin in $\C^n$ and $\mathring B_\delta$ is the open ball of radius $\delta$ centered at the origin in $\C^p$. After shrinking $\delta$ if necessary, $\mathring B_\delta\smallsetminus \Delta$ is connected. By the Ehresmann Lemma for manifolds with boundary (see \cite[Lemma 6.2.10]{LNS}), \eqref{fibration} is a smooth fibre bundle with fibre $F=B_\epsilon\cap \phi^{-1}(y)$, for some $y\in \mathring B_\delta\smallsetminus \Delta$. The fibre $F$ is called the \emph{Milnor fibre} of $(X,0)$. 

Associated with the fibre bundle \eqref{fibration}, we also have the \emph{monodromy action} which is the group homomorphism
\begin{equation}\label{monodromy}
\pi_1(\mathring B_\delta\smallsetminus \Delta; y)\longrightarrow \Aut(H_*(F))
\end{equation}
where $\Aut(H_*(F))$ is the group of automorphisms of the homology of the fibre $H_*(F)$. The image of \eqref{monodromy} is known as the \emph{monodromy group} of $(X,0)$.

Since $\mathring B_\delta\smallsetminus \Delta$ is connected, we may write $\pi_1(\mathring B_\delta\smallsetminus \Delta)$ instead of $\pi_1(\mathring B_\delta\smallsetminus \Delta;y)$. A different choice of base point will give a conjugated monodromy via the isomorphism induced by a path between the two base points.

\begin{defi} We say that a line $L$ in $\C^p$ through the origin is \emph{generic} if, as set germs,
\[
L\cap \Delta=\{0\},\quad L\cap C_0(\Delta)=\{0\},
\] 
where $C_0(\Delta)$ is the Zariski tangent cone of $\Delta$ at the origin. The \emph{monodromy with respect to $L$} is the monodromy $H_*(F)\to H_*(F)$ associated with the smooth fibre bundle
\begin{equation}\label{fibration2}
\phi: B_\epsilon\cap \phi^{-1}(\partial B_\eta\cap L)\to \partial B_\eta\cap L,
\end{equation}
for $\eta>0$ small enough, and induced by the loop $\alpha_{y}(\theta)=e^{2\pi i\theta}y$, with $\theta\in[0,1]$ and some $y\in \partial B_\eta\cap L$.
\end{defi}

It follows from the Noether Normalisation Theorem (see e.g. \cite[Corollary 3.3.19]{deJongPfister}) that the set $\Omega$ of generic lines is a non-empty Zariski open subset of $\mathbb P^{p-1}$. In particular, $\Omega$ is connected.

On the other hand, the condition that $L$ is generic is equivalent to the fact that the local intersection number $i(\Delta,L;0)$ of $\Delta$ and $L$ in $\C^p$ at the origin is equal to the multiplicity $m:=m(\Delta,0)$. In fact, let $\varphi\in\mathcal O_p$ be such that $\varphi=0$ is a reduced equation for $\Delta$. Then we can write $\varphi=\varphi_m+\varphi_{m+1}+\dots$, where each $\varphi_i$ is homogeneous of degree $i$ and $\varphi_m\ne0$. Moreover, $\varphi_m=0$ is an equation for $C_0(\Delta)$. Assume also that $L$ is generated by some $v\in\C^p$, $v\ne0$, so it has a parametrisation $\gamma(t)=tv$, $t\in\C$. This gives that $i(\Delta,L;0)$ is the order of the composition $\gamma\circ\varphi:(\C,0)\to(\C,0)$ and this is equal to $m$ if and only if $L$ is generic.

\begin{lem} The monodromy with respect to $L$ is independent of the choice of the generic line $L\in\Omega$.
\end{lem}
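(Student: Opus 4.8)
The plan is to realize the monodromy with respect to a generic line $L$ as the image under the global monodromy representation \eqref{monodromy} of the homotopy class of the loop $\alpha_{y}$, and then to exploit the connectedness of $\Omega$ to show that the loops attached to two different generic lines are freely homotopic inside $\mathring B_\delta\smallsetminus\Delta$. Since a free homotopy between loops produces conjugate elements of $\pi_1(\mathring B_\delta\smallsetminus\Delta)$, and since a change of base point already acts on the monodromy by conjugation through the induced identification of fibres, this will yield that the two monodromies agree once the fibres are identified along the bundle \eqref{fibration}. First I would observe that the restriction of the fibre bundle \eqref{fibration} over the circle $\partial B_\eta\cap L$ is exactly the bundle \eqref{fibration2}, so the monodromy with respect to $L$ coincides with the image of $[\alpha_{y}]$ under \eqref{monodromy}; this reduces the statement to a purely topological claim about loops in the complement of $\Delta$.

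To build the homotopy, I would fix two generic lines $L_1,L_2\in\Omega$ and, using that $\Omega$ is a connected Zariski open subset of $\mathbb P^{p-1}$ and hence path connected, choose a path $\sigma\colon[0,1]\to\Omega$ from $L_1$ to $L_2$. Selecting a continuous family of generators $v_s\in\C^p\smallsetminus\{0\}$ of $\sigma(s)$, I would set $y_s=\eta\,v_s/|v_s|$ and define
\[
H\colon[0,1]\times[0,1]\to\C^p,\qquad H(s,\theta)=e^{2\pi \imag\theta}y_s.
\]
For each fixed $s$ this is the loop $\alpha_{y_s}$ running once around the circle $\partial B_\eta\cap\sigma(s)$, and at the endpoints $s=0,1$ it gives the loops defining the monodromies with respect to $L_1$ and $L_2$.

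The key point to check, and the main obstacle, is that the image of $H$ stays inside $\mathring B_\delta\smallsetminus\Delta$. Each circle $\partial B_\eta\cap\sigma(s)$ lies in $\mathring B_\delta$ for $\eta<\delta$, and it avoids $\Delta$ precisely because $\sigma(s)$ is generic, so $\sigma(s)\cap\Delta=\{0\}$ as germs. To make this uniform in $s$ I would use that $\sigma([0,1])$ is compact: the points of $\Delta$ lying on the lines $\sigma(s)$ and distinct from the origin stay bounded away from $0$, so there is a single $\eta>0$ with $(\partial B_\eta\cap\sigma(s))\cap\Delta=\emptyset$ for every $s$. Granting this, $H$ is a free homotopy in $\mathring B_\delta\smallsetminus\Delta$ between $\alpha_{y_0}$ and $\alpha_{y_1}$, the conjugating path being $s\mapsto y_s$. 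By \eqref{monodromy} the two monodromies are therefore conjugate through the identification of $H_*(F)$ induced by this path, which is exactly the identification supplied by the bundle \eqref{fibration}; hence the monodromy with respect to $L$ is independent of the choice of $L\in\Omega$.
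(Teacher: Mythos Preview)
Your overall strategy---realise the monodromy with respect to $L$ as the image of $[\alpha_y]$ under the global monodromy action, then connect two generic lines by a path in $\Omega$ and exhibit a free homotopy of the corresponding loops inside $\mathring B_\delta\smallsetminus\Delta$---is correct and matches the paper's approach. The paper also reduces everything to showing that the loops $\alpha_y$ attached to nearby generic lines are homotopic; it simply works locally around a fixed $L_0\in\Omega$ and then invokes connectedness of $\Omega$, whereas you work globally along a single path $\sigma$.

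The one substantive difference is in how the uniform radius $\eta$ with $\partial B_\eta\cap\sigma(s)\cap\Delta=\emptyset$ for all $s$ is obtained. The paper argues algebraically: since $i(\Delta,L_u;0)=m(\Delta,0)$ is constant and the quotient $\mathcal O_{\C\times U}/(\varphi\circ\gamma_u)$ is Cohen--Macaulay, conservation of multiplicity forbids any intersection point from splitting off the origin. You instead invoke compactness of $\sigma([0,1])$. That is the right instinct, but as written it is not quite enough: compactness of the parameter interval does not by itself exclude a sequence $z_n\in(\sigma(s_n)\cap\Delta)\smallsetminus\{0\}$ with $z_n\to 0$. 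What rules this out is the \emph{second} condition in the definition of a generic line. Passing to a subsequence with $s_n\to s_\infty$, the secant directions $z_n/|z_n|$ converge to a direction on $\sigma(s_\infty)$, and any such limit of secants of $\Delta$ through $0$ lies in $C_0(\Delta)$; this contradicts $\sigma(s_\infty)\cap C_0(\Delta)=\{0\}$. Once you insert this sentence your argument is complete, and arguably a shade more elementary than the Cohen--Macaulay route.
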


\begin{proof} Since $\Omega$ is connected, it is enough to show that given $L_0\in\Omega$, there exists an open neighbourhood $W$ of $L_0$ in $\Omega$ such that the monodromy with respect to $L$ is independent of $L\in W$. For simplicity, we assume that $L_0$ has homogeneous coordinates $[u_0:1]$ in $\mathbb P^{p-1}$, for some $u_0\in\C^{p-1}$. We take an open neighbourhood $U$ of $u_0$ in $\C^{p-1}$ such that for all $u\in U$, the line $L_u=[u:1]\in\Omega$. 

Assume that $\Delta$ has reduced equation $\varphi=0$, for some $\varphi\in\mathcal O_p$. Let $\gamma_u:\C\to\C^p$ be the parametrisation of $L_u$  given by $\gamma_u(t)=t(u,1)$. Then the local intersection number of $L_u$ and $\Delta$ at the origin is 
\[
i(\Delta,L_u;0)=\dim_\C\frac{\mathcal O_{\C,0}}{(\varphi\circ\gamma_u)},
\]
which is constant equal to $m:=m(\Delta,0)$ for all $u\in U$. Since
\[
\frac{\mathcal O_{\C\times U,(0,u_0)}}{(\varphi\circ\gamma_u)},
\]
is Cohen-Macaulay of dimension $p-1$, we have conservation of the multiplicity. This means that there exists an open neighbourhood $D\times \mathring B_\rho$ of  $(0,u_0)$ in $\C\times U$  such that for all $u\in \mathring B_\rho$,
\[
\dim_\C\frac{\mathcal O_{\C,0}}{(\varphi\circ\gamma_{u_0})}=\sum_{\varphi\circ\gamma_{u}(t)=0}\dim_\C\frac{\mathcal O_{\C,t}}{(\varphi\circ\gamma_{u})}.
\]
Now the constancy of the local intersection number at the origin implies that $\varphi\circ\gamma_u(t)=0$ if and only if $t=0$. Hence we can find $\eta>0$ small enough such that $\partial B_\eta\cap L_u\cap \Delta=\emptyset$, for all $u\in \mathring B_\rho$. 

Finally, let 
\[
A=\partial B_\eta\cap\left(\bigcup_{u\in \mathring B_\rho} L_u\right).
\]
We have $A\subset \mathring B_\delta\smallsetminus \Delta$ and $A$ is homeomorphic to $S^1\times \mathring B_\rho$. Hence $A$ has the homotopy type of $S^1$ and all the loops $\alpha_{y}$, for $y\in A$, are homotopic in $B_\delta\smallsetminus \Delta$, via the isomorphism induced by a path between the base points. By \eqref{monodromy}, the associated monodromies $H_*(F)\to H_*(F)$ are all equal. We take $W$ as the set of lines $L_u$, with $u\in \mathring B_\rho$.
\end{proof}

\begin{defi} The monodromy with respect to a generic line $L$ is called the \emph{generic monodromy} of $(X,0)$.
\end{defi}

In practice, given a generic line $L$ we can choose coordinates in $\C^p$ such that $L$ is the line given by the $y_p$-axis. It follows that the set germ $(X_1,0)$ defined by the zeros of $\phi_1,\dots,\phi_{p-1}$ is an ICIS of dimension $d+1$. The generic monodromy is now the monodromy of the restriction $\phi_p|_{(X_1,0)}:(X_1,0)\to(\C,0)$, that is, the monodromy associated with the smooth fibre bundle
\[
\phi_p: X_1\cap B_\epsilon\cap \phi_p^{-1}(\partial D_\eta)\to \partial D_\eta,
\]
for $0<\eta\ll \epsilon\ll 1$.

\begin{lem}\label{acampo} If $(X,0)$ is singular, then its generic monodromy has Lefschetz number equal to zero.
\end{lem}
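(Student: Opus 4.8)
The plan is to deduce the statement from A'Campo's theorem quoted in the introduction. By the definition of the generic monodromy together with the reinterpretation given just before the lemma, for any generic line $L$ turned into the $y_p$-axis the generic monodromy is exactly the monodromy of the restriction $\phi_p|_{(X_1,0)}\colon(X_1,0)\to(\C,0)$, and by the previous lemma this monodromy is the same for every $L\in\Omega$. Since A'Campo's theorem asserts that the Lefschetz number of the monodromy of a function $f\colon(X_1,0)\to(\C,0)$ with $f\in\mathfrak m_{X_1,0}^2$ vanishes, it suffices to produce a single generic line $L$ for which the corresponding last component satisfies $\phi_p\in\mathfrak m_{X_1,0}^2$.

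First I would rewrite this membership as a linear condition. Since $\mathcal O_{X_1,0}=\mathcal O_n/(\phi_1,\dots,\phi_{p-1})$ and all the $\phi_i$ lie in $\mathfrak m_n$, the square $\mathfrak m_{X_1,0}^2$ is the image of $\mathfrak m_n^2$, so $\phi_p\in\mathfrak m_{X_1,0}^2$ if and only if $\phi_p\in\mathfrak m_n^2+(\phi_1,\dots,\phi_{p-1})$. Reducing modulo $\mathfrak m_n^2$ in the cotangent space $\mathfrak m_n/\mathfrak m_n^2=(\C^n)^*$, this is equivalent to the differential $d\phi_p(0)$ lying in $\mathrm{span}(d\phi_1(0),\dots,d\phi_{p-1}(0))$. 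Now I use the hypothesis: because $(X,0)$ is singular, $\phi$ is not a submersion at $0$, so $r:=\rank d\phi(0)<p$ and the $p$ covectors $d\phi_1(0),\dots,d\phi_p(0)$ span a subspace $V\subset(\C^n)^*$ of dimension $r\le p-1$.

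The remaining, and main, point is to check that a generic line realises the condition. Choosing coordinates that turn a generic line $L$ into the $y_p$-axis amounts to replacing $\phi$ by $T\circ\phi$ for a suitable $T\in\mathrm{GL}(p,\C)$ with $T(L)$ the $p$-th axis; this does not alter the $r$-dimensional space $V$ spanned by all $p$ differentials, but it does change which $p-1$ of them cut out $X_1=\phi^{-1}(L)$. A direct computation with the Jacobian matrix then shows that the first $p-1$ transformed differentials recover all of $V$ (hence contain $d\phi_p(0)$) precisely when the line $L$ is not contained in $\operatorname{Im} d\phi(0)\subset\C^p$. Here I expect the only delicate step to be verifying that this extra requirement is compatible with genericity; but since $\dim\operatorname{Im} d\phi(0)=r<p$, the lines lying inside $\operatorname{Im} d\phi(0)$ form the proper projective subspace $\mathbb P(\operatorname{Im} d\phi(0))\cong\mathbb P^{r-1}$, whereas the generic lines form the dense Zariski-open set $\Omega\subset\mathbb P^{p-1}$. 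Therefore $\Omega\smallsetminus\mathbb P(\operatorname{Im} d\phi(0))$ is non-empty, and any $L$ in it is generic, keeps $(X_1,0)$ an ICIS, and yields $\phi_p\in\mathfrak m_{X_1,0}^2$. Applying A'Campo's theorem to $\phi_p|_{(X_1,0)}$ then shows that the Lefschetz number of the generic monodromy is zero.
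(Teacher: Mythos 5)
Your proposal is correct and rests on the same two pillars as the paper's proof: the only use of the hypothesis that $(X,0)$ is singular is that $r:=\rank d\phi(0)<p$, and the whole point is to exhibit one generic line $L$ for which the last component satisfies $\phi_p\in\mathfrak m_{X_1,0}^2$, so that A'Campo's theorem applies. Where you differ is in how that line is produced. The paper puts $\phi$ in an adapted normal form $\phi(x,y)=(x,\phi_1(x,y),\dots,\phi_{p-r}(x,y))$ with $\phi_i\in\mathfrak m_n^2$, reduces to the lower-dimensional map $\psi$ on $\{0\}\times\C^{n-r}$, picks a generic line for $(Y,0)$ inside the factor $\C^{p-r}$, and then asserts that this line is still generic for $(X,0)$ in $\C^p$ --- a small auxiliary claim about discriminants that is stated without proof. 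You instead characterize the membership invariantly, $\phi_p\in\mathfrak m_{X_1,0}^2\iff d\phi_p(0)\in\operatorname{span}(d\phi_1(0),\dots,d\phi_{p-1}(0))$, observe that after the linear change this holds exactly when $L\not\subset\operatorname{Im}d\phi(0)$, and conclude by intersecting the dense Zariski-open $\Omega$ with the complement of the proper linear subspace $\mathbb P(\operatorname{Im}d\phi(0))$ in the irreducible $\mathbb P^{p-1}$. Your route is coordinate-free, leans explicitly on the independence lemma rather than on a second genericity claim, and uses a slightly weaker condition on $L$ (not contained in $\operatorname{Im}d\phi(0)$, rather than meeting it only at $0$ as in the paper's normal form); the paper's version, in exchange, displays concretely the reduction of $(X,0)$ to an ICIS all of whose defining equations lie in $\mathfrak m^2$. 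Both arguments are complete and yield the same conclusion.
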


\begin{proof} Since $(X,0)$ is singular, $\phi:(\C^n,0)\to(\C^p,0)$ has rank $r<p$. We can choose coordinates $(x,y)$ in $\C^n=\C^r\times \C^{n-r}$ and $u$ in $\C^{p-r}$ such that
\[
\phi(x,y)=(x,\phi_{1}(x,y),\dots,\phi_{p-r}(x,y)),
\]
with $\phi_i\in\mathfrak m_n^2$, for $i=1,\dots,p-r$. Hence, $(X,0)=(\{0\}\times Y,0)$, where $(Y,0)$ is the ICIS defined as the zero set of $\psi:(\C^{n-r},0)\to(\C^{p-r},0)$ given by
\[
\psi(y)=(\phi_{1}(0,y),\dots,\phi_{p-r}(0,y)).
\]
After a new linear coordinate change in $\C^{p-r}$ if necessary, we can assume that the $u_{p-r}$-axis is a generic line in $\C^{p-r}$ for $(Y,0)$. Hence, the $u_{p-r}$-axis is also a generic line in $\C^{p}$ for $(X,0)$. The generic monodromy of $(X,0)$ is the monodromy of the restriction $\phi_{p-r}|_{(X_1,0)}:(X_1,0)\to(\C,0)$, where $(X_1,0)$ is the ICIS given by $x=0$ and $\phi_1(0,y)=\dots=\phi_{p-r-1}(0,y)=0$. Since $\phi_{p-r}\in\mathfrak m_n^2$, its restriction belongs to $\mathfrak m_{X_1,0}^2$. By A'Campo's theorem \cite[Theorem 1 bis]{A'campo} (see also \cite[Theorem 0.2]{GLN}), the monodromy has Lefschetz number equal to zero.
\end{proof}

\section{Main theorem}

We consider an ICIS $(X,0)$ defined as the zero locus of a reduced holomorphic mapping $\phi:(\C^n,0)\to(\C^p,0)$. A deformation of $(X,0)$ is given by another holomorphic mapping $\Phi\colon(\C\times\C^n,0)\to (\C^p,0)$ such that $\Phi(0,x)=\phi(x)$ for all $x$. We fix a representative $\Phi:D\times W\to\C^p$, where $D$ and $W$ are open neighbourhoods of the origin in $\C$ and $\C^n$ respectively. For each $t\in D$, we
put $\phi_t(x):=\Phi(t,x)$ and $X_t:=\phi_t^{-1}(0)$ and call $(X_t)_{t\in D}$ a family of ICIS.

\begin{teo}\label{unico ponto critico} Let $(X_t)_{t\in D}$ be a holomorphic family of complete intersections as described above. Let $B$ be a Milnor ball for $(X,0)$. Then for $t$ different of zero and small enough, $X_t\cap B$ has $k_t$ singular points:  $z_1(t),\ldots,z_{k_t}(t)$. If $$\mu(X,0)=\displaystyle\sum_{i=1}^{k_t}\mu(X_t,z_i(t)),$$
 then $k_t=1$ and $\mu(X_t,z_1(t))=\mu(X,0)$ for all $t$ small enough. \end{teo}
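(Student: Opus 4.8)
The plan is to adapt L\^e's hypersurface argument, comparing the generic monodromy of $(X,0)$ with the local generic monodromies at the points $z_i(t)$ by means of the Lefschetz number. We may assume $(X,0)$ is singular, since otherwise the constant total Milnor number forces each $X_t$ to be smooth and there is nothing to prove. Fix generic coordinates in $\C^p$ as in Section~2, so that the zero locus $(X_{1,t},0)$ of $\phi_{t,1},\dots,\phi_{t,p-1}$ is a family of ICIS of dimension $d+1$, and set $g_t=\phi_{t,p}|_{X_{1,t}}$. The generic monodromy of $(X,0)$ is the monodromy $h_0$ of $g_0$ around a small circle $\partial D_\eta$, with Milnor fibre $F$ and $\chi(F)=1+(-1)^d\mu(X,0)$. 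After shrinking $D$, the restriction of $g_t$ over $\partial D_\eta$ is a fibre bundle depending continuously on $t$ (all critical values of $g_t$ tend to $0$, hence stay inside $D_\eta$), so $h_t$ is isotopic to $h_0$ and in particular $\Lambda(h_t)=\Lambda(h_0)=0$ by Lemma~\ref{acampo}.

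First I would show that the hypothesis confines every critical point of $g_t$ in $B_\epsilon$ to the zero fibre. The critical points of $g_t$ with value $0$ are exactly the singular points $z_1(t),\dots,z_{k_t}(t)$ of $X_t=g_t^{-1}(0)$, and for the generic line the Milnor fibre of $g_t$ at $z_i(t)$ coincides with the Milnor fibre of the ICIS $(X_t,z_i(t))$, so the local Milnor number $\nu_{z_i(t)}$ of $g_t$ equals $\mu(X_t,z_i(t))$. On the other hand, the Milnor fibre is topologically constant in $t$ and the total space $X_{1,t}\cap B_\epsilon\cap g_t^{-1}(D_\eta)$ is contractible, which together give $\chi(F)=1+(-1)^d\sum_w\nu_w$, where $w$ runs over all critical points of $g_t$ in $B_\epsilon$ with value in $D_\eta$; hence $\sum_w\nu_w=\mu(X,0)$. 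The assumption $\sum_i\mu(X_t,z_i(t))=\mu(X,0)$ then saturates this identity, and since $\nu_w\ge 1$ for every critical point, no critical point can lie off the zero fibre. (Equivalently, the generic line $L$ satisfies $L\cap\Delta_t=\{0\}$ for small $t$.)

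Thus all critical points of $g_t$ lie in $g_t^{-1}(0)$, with pairwise disjoint vanishing cycles supported in small balls $B_i$ around the $z_i(t)$. The geometric monodromy $h_t$ may then be taken to be the identity outside the $B_i$ and to restrict to the local monodromy $h_i$ (which is the identity near $\partial F_i$) on each local Milnor fibre $F_i$. A Mayer--Vietoris computation of the Lefschetz number, using that $\partial F_i$ is an odd-dimensional closed manifold and hence $\chi(\partial F_i)=0$, yields
\[
\Lambda(h_t)=\chi(F)-\sum_{i=1}^{k_t}\chi(F_i)+\sum_{i=1}^{k_t}\Lambda(h_i).
\]
Each $h_i$ is the generic monodromy of the singular ICIS $(X_t,z_i(t))$ for the same line, which remains generic at $z_i(t)$ for small $t$ since genericity is Zariski-open and $z_i(t)\to 0$; hence $\Lambda(h_i)=0$ by Lemma~\ref{acampo}. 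Substituting $\Lambda(h_t)=0$, $\Lambda(h_i)=0$, $\chi(F)=1+(-1)^d\mu(X,0)$ and $\chi(F_i)=1+(-1)^d\mu(X_t,z_i(t))$, and using $\sum_i\mu(X_t,z_i(t))=\mu(X,0)$, all $\mu$-terms cancel and the identity collapses to $0=1-k_t$. Hence $k_t=1$, and then $\mu(X_t,z_1(t))=\mu(X,0)$ is immediate.

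The main obstacle is the clean localisation of the Lefschetz number together with its two ingredients. On one hand, the additivity formula above is valid only because the hypothesis forces all vanishing cycles into the single fibre $g_t^{-1}(0)$; if a critical point escaped to a nonzero value, $h_t$ would be a genuine composition of monodromies around distinct critical values and the formula would fail. On the other hand, A'Campo's vanishing must apply simultaneously to $h_t$ and to every $h_i$, which requires the fixed generic line to stay generic at each $z_i(t)$ and the relative membership $\phi_{t,p}\in\mathfrak m_{X_{1,t},z_i(t)}^2$ to persist. Verifying the conservation identity $\sum_w\nu_w=\mu(X,0)$ and the equality $\nu_{z_i(t)}=\mu(X_t,z_i(t))$ for the generic line are the technical points I expect to demand the most care.
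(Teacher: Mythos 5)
Your proposal is correct and follows essentially the same route as the paper: localise the generic monodromy of $(X,0)$ at the singular points $z_i(t)$, apply Lemma~\ref{acampo} (A'Campo) both at the origin and at each $z_i(t)$, and let the trace bookkeeping collapse to $0=1-k_t$ under the $\mu$-constancy hypothesis; your Lefschetz-number additivity formula is exactly what the paper's Mayer--Vietoris diagram of traces encodes. The only difference is presentational: you make explicit the confinement of all critical points of the generic linear projection to the zero fibre and the persistence of genericity at the $z_i(t)$, steps which the paper leaves implicit in the assertion that the generic monodromy restricts to the identity on $V$ and to $h_i^*$ on each $H_d(X_{t,A}\cap B_i)$.
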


\begin{proof} We choose a Milnor ball $ B_i\subset \mathring B$ for each $(X_t,z_i)$, $i=1,\ldots,k_t$.
	
	Let $X_{t,A}:=(\phi_t)^{-1}(A)\cap B$ be a generic fiber of $\phi_t$. 
	We define the sets $U=X_{t,A}\cap (\cup_{i=1}^k B_i)$ and  $V=X_{t,A}\cap ( B\setminus\cup_{i=1}^k \mathring B_i)$.

	 We consider the following piece of the Mayer-Vietoris sequence in homology of the pair $(U,V)$ in $X_{t,A}$ 
	 
	 \begin{center} $\cdots\rightarrow
	 	H_{d+1}(X_{t,A}){\rightarrow}H_{d}(U\cap V)\rightarrow H_d(U)\oplus H_{d}(V)\rightarrow H_{d}(X_{t,A})\rightarrow H_{d-1}(U\cap V){\rightarrow}H_{d-1}(U)\oplus H_{d-1}(V){\rightarrow}H_{d-1}(X_{t,A}){\rightarrow}
	 	\cdots$ \end{center}
	where $d=n-p$. 
	
	Since the Milnor fiber of an ICIS of dimension $d$ is a bouquet of $d$-spheres, we conclude that $H_{d-1}(X_{t,A})=H_{d-1}(U)=H_{d+1}(X_{t,A})=0$. Therefore the sequence
	
	\begin{center} $
		0{\rightarrow}H_{d}(U\cap V)\rightarrow \oplus_{i=1}^{k_t} H_{d}(X_{t,A}\cap B_i)\oplus H_{d}(V)\rightarrow H_{d}(X_{t,A})\rightarrow H_{d-1}(U\cap V){\rightarrow} H_{d-1}(V){\rightarrow}0$ \end{center}
	 is exact. Hence $a_1-a_2+\mu(X,0)-\sum_{i=1}^{k_t} \mu(X_t,z_i)-a_3+a_4=0$, where $a_1=\dim_{\mathbb{C}}H_{d-1}(V)$, $a_2=\dim_{\mathbb{C}}H_{d-1}(U\cap V)$, $a_3=\dim_{\mathbb{C}}H_{d}(V)$ and $a_4=\dim_{\mathbb{C}}H_{d}(U\cap V)$. By the hypothesis, $\mu(X,0)=\displaystyle\sum_{i=1}^{k_t}\mu(X_t,z_i)$. Therefore 
	 \begin{equation}\label{eq1}
	 a_1-a_2-a_3+a_4=0.
	 \end{equation}
	
	Applying the generic monodromy to the exact sequence, we get the commutative diagram\newline
	$\label{diagrama} \xymatrix{ 0\to H_{d}(U\cap V)\ar[d]^{Id_4}\ar[r] &
			\oplus_{i=1}^{k_t}H_{d}(X_{t,A}\cap B_i)\oplus H_d(V)\ar[d]^{\oplus_{i=1}^k h_i^*\oplus Id_3}\ar[r]\!& H_{d}(X_{t,A})\ar[d]^{h^*}\ar[r]\! &
		 H_{d-1}(U\cap V)\ar[d]^{Id_2}\ar[r]\! & H_{d-1}(V)\ar[d]^{Id_1}\to 0\\
		 0\to H_{d}(U\cap V)\ar[r] &
		 \oplus_{i=1}^{k_t}H_{d}(X_{t,A}\cap B_i)\oplus H_d(V)\ar[r]\!& H_{d}(X_{t,A})\ar[r]\! &
		 H_{d-1}(U\cap V)\ar[r]\!&H_{d-1}(V)\to 0}$
		 where $h^*$ and $h_i^*$ are the generic monodromies of $(X,0)$ and $(X_t,z_i)$, respectively, and $Id_1, \dots, Id_4$ are the identities.

	Hence, if we denote by $tr$ the trace of a homomorphism then we have $tr(Id_1)-tr(Id_2)+tr(h^*)-(\sum_{i=1}^{k_t}tr(h_i^*)+tr(Id_3))+tr(Id_4)=0.$ That is, 
	\begin{equation}\label{eq2} a_1-a_2+tr(h^*)-\sum_{i=1}^{k_t}tr(h_i^*)-a_3+a_4=0. \end{equation}
	
	By comparing the equations (\ref{eq1}) and (\ref{eq2}), we conclude that
	\begin{equation}\label{eq3}
	tr(h^*)=\sum_{i=1}^{k_t}tr(h_i^*).
	\end{equation}
	
	On the other hand, by Lemma \ref{acampo},
		
		\begin{equation*} \sum_{q\geq 0}(-1)^q tr({h^*}^q)=0 \, \, \mbox{and} \, \,  \sum_{q\geq 0}(-1)^q tr({h_i^*}^q)=0 \end{equation*}

		Since ${h^*}^q=0$ and ${h_i^*}^q=0$, for all $q\neq 0, d$, and $tr({h^*}^0)=1=tr({h_i^*}^0)$, we conclude that $$0=1+(-1)^dtr({h^*}^d) \, \, \mbox{and} \, \, 0=1+(-1)^dtr({h_i^*}^d).$$ 
		That is, $tr({h^*}^d)=tr({h_i^*}^d)=(-1)^{d+1}$. 
		
		Then, by the equation (\ref{eq3}), $k_t=1$ and thus $\mu(X,0)=\mu(X_t,z_1)$. \end{proof}
	\begin{remark}
		 The hypersurface case of the previous theorem ($p=1$) gives the result of Gabrielov, Lazzieri and Lê in \cite{Gabrielov, Lazzeri, Le}.
		It  answers negatively the question formulated by Buchweitz and Greuel in the Example 7.2.5 of \cite{Buchweitz and Greuel} about the existence of a family of ICIS with constant total Milnor number with splitting of the singularity.
	\end{remark}

\section{Applications}
\subsection{}
We also know, from the works of Lê, Lazzeri and Gabriélov (\cite{Le,Lazzeri,Gabrielov}) that if a family of holomorphic function germs $f_t\colon(\C^n,0)\to(\C,0)$ with isolated singularity is such that  the singular set of $f_t$ is equal to the singular set of the variety $f_t^{-1}(0)$ then, in a small ball around the origin in $\C^n$, the only singular point of each $f_t$ is $0$.
We can use the result of the previous section to show that this continues to be true if we change the source of the germs by an ICIS.

More specifically, we consider an ICIS $(X,0)$ defined as the zero locus of a reduced holomorphic mapping $\phi:(\C^n,0)\to(\C^p,0)$ and $f\colon(X,0)\to(\C,0)$ a holomorphic function germ with isolated singularity. In this case, we define the Milnor number of $f$, $\mu(f)$, as the dimension as a $\C$-vector space of the quotient $\frac{\mathcal O_n}{\left\langle\phi\right\rangle+J(f,\phi)}$, where $J(f,\phi)$ is the ideal generated by the maximal minors of the Jacobian matrix of the map $(f,\phi)$.

Let $F:(\C\times X,0)\rightarrow
(\mathbb{C},0)$ be a (flat) deformation of $f$. We fix a representative $F:D\times W\rightarrow\C$, where $D$ and $W$ are open neighbourhoods of the origin in $\C$ and $X$ respectively. We denote $f_t(x):=F(t,x)$ and $Y_t=X\cap f_t^{-1}(0)$. In this case, the singular set of each $f_t$, $S(f_t)$, is the zero set of the ideal in $\mathcal O_W$ (the ring of holomorphic functions from $W$ to $\C$) generated by $\phi$ and by the minors of maximal size of the Jacobian matrix of the map $(f_t,\phi)$; the singular set of $Y_t$, $S(Y_t)$ is the zero set of the ideal generated by $\phi$, $f_t$ and the minors of maximal size of the Jacobian matrix of the map $(f_t,\phi)$ (we remark that we are not making derivatives with respect to $t$). It is always true that $S(Y_t)\subset S(f_t)$. We assume $0$ is a singular point of  $S(f_t)$
\begin{cor}\label{conj singular da funcao}
If $S(f_t)=S(Y_t)$, then $S(f_t)=\{0\}$  on a fixed neighborhood of the origin, which is independent of $t$.
\end{cor}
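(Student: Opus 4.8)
I want to show that if $S(f_t) = S(Y_t)$ then $S(f_t) = \{0\}$ on a neighbourhood independent of $t$. The natural strategy is to reduce this to Theorem~\ref{unico ponto critico} by reinterpreting a family of functions on $(X,0)$ as a family of ICIS. Given the function germ $f\colon(X,0)\to(\C,0)$ and the deformation $F\colon(\C\times X,0)\to(\C,0)$, I would consider the map germ $\Psi_t := (\phi, f_t)\colon(\C^n,0)\to(\C^{p+1},0)$, whose zero locus is exactly $Y_t = X\cap f_t^{-1}(0)$. The key observation is that the singular points of the fibre $Y_t$ as an ICIS are precisely the points of $S(Y_t)$, and at each such point the Milnor number $\mu(Y_t,z)$ coincides with the quantity computed from the relative Jacobian ideal $J(f_t,\phi)$. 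So $(Y_t)_{t\in D}$ is a family of ICIS (of dimension $d-1$ where $d=n-p$), and I can try to apply the main theorem to it.

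\textbf{The bridge to the hypothesis.} To invoke Theorem~\ref{unico ponto critico} I need the \emph{total Milnor number} of $Y_t$ to be constant, i.e.\ $\mu(Y_0,0) = \sum_{z\in S(Y_t)} \mu(Y_t,z)$. This is where the hypothesis $S(f_t)=S(Y_t)$ must be used. The idea is that the Milnor number $\mu(f_t,z)$ of the function at a critical point $z$ and the Milnor number $\mu(Y_t,z)$ of the fibre as an ICIS are related, and the condition $S(f_t)=S(Y_t)$ forces every critical point of $f_t$ to lie on the fibre $Y_t$, so that no critical value migrates off the zero fibre. Under this condition one expects a conservation-of-multiplicity statement: the sum of the local fibre-Milnor-numbers over the split critical points equals the Milnor number at the origin of the unperturbed fibre $Y_0$. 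Concretely, the flatness of the family together with $S(f_t)\subset Y_t$ should give that $\mu(f)$ (the total critical-point count) and the total fibre Milnor number agree and are conserved, so that $\sum_{z}\mu(Y_t,z) = \mu(Y_0,0)$. Establishing this conservation cleanly is the crux.

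\textbf{Applying the main theorem.} Once the total Milnor number of the family $(Y_t)$ is constant, Theorem~\ref{unico ponto critico} yields that $Y_t$ has a unique singular point, which by continuity must be $z=0$ (it is the point to which the origin deforms), and $\mu(Y_t,0)=\mu(Y_0,0)$. Since by hypothesis $S(f_t)=S(Y_t)$, and $S(Y_t)$ is exactly the singular locus of $Y_t$, we conclude $S(f_t)=S(Y_t)=\{0\}$. The uniformity of the neighbourhood (independence of $t$) comes for free from the choice of a fixed Milnor ball $B$ for the family, exactly as in the statement of the main theorem, so no extra work is needed there.

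\textbf{The main obstacle.} I expect the hard part to be the conservation-of-multiplicity step: translating the set-theoretic hypothesis $S(f_t)=S(Y_t)$ into the numerical equality of total Milnor numbers that the main theorem requires. One must check carefully that the relevant local ring (the one computing $\mu(Y_t,z)$ via $J(f_t,\phi)$ augmented by $f_t$) is Cohen--Macaulay of the right dimension so that conservation of multiplicity in a flat family applies, and that $S(f_t)=S(Y_t)$ is exactly what guarantees the algebra defining $S(f_t)$ is finite over the base parameter $t$. A possible subtlety is ensuring that the fibre $Y_t$ remains a genuine ICIS (reduced, isolated singularities) for small $t\neq 0$, which should follow from the deformation being flat and $B$ being a common Milnor ball, but deserves explicit verification.
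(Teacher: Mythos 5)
Your overall strategy is the same as the paper's: view $(Y_t)$ as a family of ICIS, show its total Milnor number is constant, and invoke Theorem~\ref{unico ponto critico}. But you explicitly leave the decisive step --- turning the set-theoretic hypothesis $S(f_t)=S(Y_t)$ into the numerical equality $\mu(Y_0,0)=\sum_{x}\mu(Y_t,x)$ --- as an unproven ``crux,'' and the mechanism you sketch for it would not work. You propose applying conservation of multiplicity to ``the ring computing $\mu(Y_t,z)$ via $J(f_t,\phi)$ augmented by $f_t$.'' The colength of $\langle\phi\rangle+\langle f_t\rangle+J(f_t,\phi)$ does not compute $\mu(Y_t,z)$ (the Milnor number of an ICIS is not the colength of its singular-locus ideal), and adding $f_t$ puts the ideal in the wrong codimension for the Cohen--Macaulay argument. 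The quantity that \emph{is} conserved is $\mu(f_t,x)=\dim_\C\mathcal O_n/(\langle\phi\rangle+J(f_t,\phi))_x$, because $\mathcal O_{n+1}/(\langle\phi\rangle+J(F,\phi))$ is Cohen--Macaulay (Greuel, \cite[Lemma 1.9]{Thesis Greuel}); this gives $\mu(f)=\sum_{x\in S(f_t)}\mu(f_t,x)$.

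The missing idea that bridges this conserved number to the one the main theorem needs is the L\^e--Greuel formula $\mu(f_t,x)=\mu(X,x)+\mu(Y_t,x)$. With it, the hypothesis $S(f_t)=S(Y_t)$ guarantees the conserved number concentrates only at points of $Y_t$ (this is precisely where the hypothesis enters --- without it, critical points off the zero fibre would absorb part of $\mu(f)$), and the isolatedness of the singularity of $X$ kills every term $\mu(X,x)$ with $x\neq 0$, leaving exactly $\mu(Y_0,0)=\sum_{x\in S(Y_t)}\mu(Y_t,x)$. Without L\^e--Greuel your ``one expects a conservation-of-multiplicity statement'' has no proof, so the argument as written has a genuine gap at its central step. (A minor additional point: the unique singular point is the origin not ``by continuity'' but because $0\in S(f_t)=S(Y_t)$ by the standing assumption, and the main theorem says $S(Y_t)$ is a single point.)
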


\begin{proof} 
We will consider the Milnor number of each germ $f_t\colon(X,0)\to \C$. In this case,
$$\mu(f_t)=\dim_{\C}\frac{\mathcal O_n}{\left\langle\phi\right\rangle+J(f_t,\phi)}.$$

We choose a Milnor ball $B$ for $(X,0)$. Since $\frac{\mathcal{O}_{n+1}}{\langle \phi\rangle+J(f_t,\phi)}$ is a Cohen-Macaulay ring  (see \cite[Lemma 1.9]{Thesis Greuel}) (here, we are considering $t$ as variable), by the conservation of number principle, in $\mathring B$,
$$
\mu(f)=\displaystyle\sum_{x\in S(f_t)}\mu({f_t},x),
$$ 
for $t$ sufficiently small. Therefore, since we are assuming that $S(f_t)=S(Y_t)$, 
$$
\mu(f)=\displaystyle\sum_{x\in S(Y_t)}\mu({f_t},x).
$$
By using the Lê-Greuel formula (see \cite{Thesis Greuel, Le 1}) we see that
$$\mu(X,0)+\mu(X\cap f^{-1}(0),0)=\sum_{x\in S(Y_t)}(\mu(X,x)+\mu(Y_t,x)).$$	
Since $X$ has an isolated singularity at the origin, $$\mu(X,0)+\mu(X\cap f^{-1}(0),0)=\mu(X,0)+\sum_{x\in S(Y_t)}\mu(Y_t,x).$$	
Hence, $$\mu(X\cap f^{-1}(0),0)=\sum_{x\in S(Y_t)}\mu(Y_t,x).$$
By Theorem \ref{unico ponto critico}, $\sharp S(Y_t)=1$ and, therefore, $S(f_t)=\{0\}$. 
	
\end{proof}

\subsection{}

Let $f_t\colon(\C^n,0)\to\C$ be a holomorphic family of holomorphic function germs with isolated singularities. In \cite{Greuel}, Greuel presents five conditions which are equivalent to such a family to have constant Milnor number. These conditions are related to the integral closure and to the radical of the ideal which defines the singular set of the members of the family.

In \cite{Rafaela Bruna e Tomazella}, we study what happens to Greuel's result for a holomorphic family of holomorphic function germs $f_t\colon(X,0)\to \C$, where $(X,0)$ is an ICIS. Let $\phi:(\mathbb{C}^n,0)\rightarrow
(\mathbb{C}^p,0)$ be a holomorphic map germ whose zero set defines an ICIS $(X,0)$. Let $f\colon (X,0)\to(\C,0)$ be a holomorphic function germ with isolated singularity and $F\colon(\C\times X,0)\to
(\mathbb{C},0)$ be a (flat) deformation of $f$. We write $f_t(x):=F(x,t)$. We denote by $J$ the ideal in $\mathcal O_{\C\times X}$ whose zero set is the union of the singular sets of the $f_t$'s, that is $J=J(f_t,\phi)$ (without derivatives with respect to $t$). The assertions of Greuel's theorem for this case are
\begin{enumerate}
\item\label{1} $F$ is $\mu$-constant;
\item\label{2} For every holomorphic curve $\gamma: (\mathbb{C},0)\rightarrow
(\mathbb{C}\times X,0)$, $\nu\left(\frac{\partial F}{\partial
	t}\circ\gamma\right)>\inf\{\nu(g_i\circ\gamma)\}$, where $g_i$ is each of the maximal minors of the Jacobian matrix of $(f_t,\phi)$ and $\nu$ denotes the usual valuation of a complex curve;
\item\label{3} Same statement as in (\ref{2}) with $``>"$ replaced by $``\geq"$;
\item\label{4} $\frac{\partial F}{\partial t} \in \overline{J}$ as an ideal in $\mathcal{O}_{\mathbb{C}\times X}$, where $\overline{J}$ is the integral closure of the ideal;
\item\label{5} $\frac{\partial F}{\partial t} \in \sqrt{J}$ as an ideal in
$\mathcal{O}_{\mathbb{C}\times X}$;
\item\label{6} The zero set of $J$, $v(J)$ is equal to $\mathbb{C}\times\{0\}$ near $(0,0)$ in
$\mathcal{O}_{\mathbb{C}\times X}$.
\end{enumerate}

We prove, in \cite{Rafaela Bruna e Tomazella}, that
\begin{center} (\ref{2}) $\Rightarrow$ (\ref{3}) $\Leftrightarrow$ (\ref{4})	$\Rightarrow$ (\ref{5}),

	(\ref{4}) $\Rightarrow$ (\ref{1}) $\Leftrightarrow$ (\ref{6}) $\Rightarrow$ (\ref{5}).
	
\end{center}

Also in \cite{Rafaela Bruna e Tomazella} we present the following example (Example 3.6(b))

\begin{ex} Let $(X,0)\subset (\mathbb{C}^2,0)$ be defined by $\phi(x,y)=x^p-y^q$, with $q\geq 3$, and $f\colon(X,0)\to(\mathbb{C},0)$ defined by 	$f(x,y)=x$. We consider the deformation of $f$ defined by $F(t,(x,y))=x+ty$.
	
In this case $J=\langle -qy^{q-1}-ptx^{p-1}\rangle$. If $p<q$ then it is not hard to see that $\mu(f)=pq-p$ and $\mu(f_t)=pq-q$. Therefore (\ref{1}) is not true.
\end{ex}

Moreover, we said wrongly in \cite{Rafaela Bruna e Tomazella} that $\frac{\partial F}{\partial t} \in \sqrt{J}$ as an ideal in $\mathcal{O}_{\mathbb{C}\times X}$. If it was the case, $(\ref{5})$ would be true for this example and we would have a counter-example for $(\ref{5})\Rightarrow (\ref{1})$. We correct this mistake in the following theorem which proves that $(\ref{5})\Rightarrow (\ref{6})$ and, therefore, $(\ref{5})\Rightarrow (\ref{1})$. Here, we see $t$ as a variable.

\begin{teo} If $\frac{\partial F}{\partial t} \in \sqrt{J}$ as an ideal in $\mathcal{O}_{\mathbb{C}\times X}$ then $v(J)=\mathbb{C}\times\{0\}$ near $(0,0)$. \end{teo}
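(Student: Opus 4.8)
The plan is to show that the hypothesis forces $F$ to vanish identically on the critical locus $v(J)$, so that every critical point of every $f_t$ automatically lies on the zero fibre $Y_t$; this reduces the statement to Corollary \ref{conj singular da funcao}. Recall that $v(J)$ is the set germ of pairs $(t,x)$ with $x\in S(f_t)$, and that since each $f_t$ has isolated singularities it is a curve germ in $(\C\times X,0)$ containing $\C\times\{0\}$. By the R\"uckert Nullstellensatz, the hypothesis $\partial F/\partial t\in\sqrt J$ means precisely that $\partial F/\partial t$ vanishes at every point of $v(J)$, so everything rests on exploiting this vanishing along $v(J)$.

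First I would handle the branch $\C\times\{0\}$ separately: along it only $t$ varies, so $\frac{d}{dt}F(t,0)=\frac{\partial F}{\partial t}(t,0)=0$, and since $F(0,0)=f(0)=0$ we obtain $F(t,0)=0$ for all small $t$. For any other branch $C$ of $v(J)$, I would take a parametrisation $\gamma(s)=(t(s),x(s))$ coming from the normalisation of $C$, where $x(s)\in X$ lies in the smooth locus of $X$ for $s\ne0$ (here I use that $(X,0)$ has an isolated singularity, so $X\smallsetminus\{0\}$ is smooth and $x(s)\ne0$ for $s\ne 0$). Differentiating $F\circ\gamma$ by the chain rule, the $t$-component $\frac{\partial F}{\partial t}(\gamma(s))\,t'(s)$ vanishes because $\partial F/\partial t=0$ on $v(J)$, while the relative component $\sum_i(\partial F/\partial x_i)(\gamma(s))\,x_i'(s)$ vanishes because $x(s)\in S(f_{t(s)})$: on the smooth part of $X$ the condition defining $S(f_t)$ (vanishing of the maximal minors of the Jacobian of $(f_t,\phi)$) is the Lagrange relation $d_xF=\sum_j\lambda_j\,d\phi_j$, and $x'(s)$ is tangent to $X$, hence annihilated by each $d\phi_j$. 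Thus $(F\circ\gamma)'\equiv0$, so $F\circ\gamma$ is the constant $F(\gamma(0))=0$.

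Combining both cases, $F$ vanishes on all of $v(J)$, which says exactly that $S(f_t)\subseteq Y_t$ for every small $t$. Since $S(Y_t)$ is by definition the intersection of $S(f_t)$ with the zero fibre $Y_t$, the inclusion $S(f_t)\subseteq Y_t$ gives $S(f_t)=S(f_t)\cap Y_t=S(Y_t)$. Corollary \ref{conj singular da funcao} then yields $S(f_t)=\{0\}$ on a neighbourhood of the origin independent of $t$, which is precisely the assertion $v(J)=\C\times\{0\}$ near $(0,0)$.

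I expect the delicate point to be the chain-rule computation on the non-trivial branches, specifically justifying that the relative gradient term drops out. This needs the identification of the minors-vanishing condition defining $S(f_t)$ with the Lagrange-multiplier characterisation of a critical point of $f_t$ on $X$, carried out on the smooth locus of $X$ (valid because the only singular point of $X$ is the origin, away from $\gamma(s)$ for $s\ne0$), together with the fact that $x'(s)\in T_{x(s)}X$. One should also confirm that $\gamma$ is holomorphic across $s=0$ so that the constant value of $F\circ\gamma$ may be evaluated at the origin; this is immediate from the parametrisation. Everything else is a formal application of the earlier corollary.
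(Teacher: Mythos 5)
Your proposal is correct and follows the same overall strategy as the paper's proof: use the Nullstellensatz to turn $\frac{\partial F}{\partial t}\in\sqrt{J}$ into the vanishing of $\frac{\partial F}{\partial t}$ on $v(J)$, deduce that $F$ itself vanishes on $v(J)$, conclude $S(f_t)=S(Y_t)$, and invoke Corollary \ref{conj singular da funcao}. Where you differ is in the execution of the key step, the vanishing of $F$ on $v(J)$ away from the singular point of $X$. The paper passes to a local chart $\psi\colon\C\times U\to\C\times U'$ identifying a smooth piece of $X$ with an open set of $\C^d$, pulls back $J$ and $F$, and argues (rather tersely) that $\psi^*F$ is independent of $t$ along $v(J')$ and hence vanishes there. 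You instead parametrise each branch of the critical curve $v(J)$ by its normalisation and compute $(F\circ\gamma)'$ directly, killing the $t$-component by the hypothesis and the relative component by the Lagrange-multiplier characterisation of $S(f_t)$ at smooth points of $X$ together with $x'(s)\in T_{x(s)}X$. Your version makes explicit exactly the points the paper leaves implicit (that $v(J)$ is a curve, that the branch $\C\times\{0\}$ needs the separate one-line integration argument to get $F(t,0)=0$, and that the relative gradient term drops out for tangential reasons), at the modest cost of having to know that the vanishing of the maximal minors at a smooth point of $X$ is equivalent to $d_xF\in\langle d\phi_1,\dots,d\phi_p\rangle$; the chart argument buys a reduction to the classical smooth-ambient statement without discussing branches. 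Both are valid; no gap.
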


\begin{proof} Since $\frac{\partial F}{\partial t} \in \sqrt{J}$ as an ideal in $\mathcal{O}_{\mathbb{C}\times X}$ then $\frac{\partial F}{\partial t} \in \sqrt{\left\langle \phi\right\rangle+J}$ as an ideal in $\mathcal{O}_{n+1}$. Therefore, by Hilbert's Nullstellensatz Theorem, $v(\sqrt{\left\langle \phi\right\rangle+J})\subset v(\frac{\partial F}{\partial t})$. Then, 
$$
\frac{\partial F}{\partial
		t}|_{v(\left\langle \phi\right\rangle+J)}\equiv 0.$$
	
We remark that $v(\left\langle\phi\right\rangle+J)$ is the set $\{(t,x):x\in S(f_t)\}$. Let $(t,x)\in v(\left\langle\phi\right\rangle+J)$.

On one hand, if $x\in S(X)$ then $x=0$ because $(X,0)$ has isolated singularity at $0$, therefore, $f_t(x)=f_t(0)=0$.

 On the other hand, if $x\notin S(X)$ then there exists a diffeomorphism $\psi\colon \C\times U\to \C\times U'$, where $U$ and $U'$ are open neighbourhoods in $\mathbb{C}^{d}$ and $X$ respectively with $0\in U$ and $x\in U'$ such that $\psi(t,0)=(t,x)$.

We denote by $J'$ the pullback of $J$ by $\psi$, $J'=	\psi^*(J)$. By the hypothesis, $\psi^*(\frac{\partial F}{\partial t}) \in \sqrt{J'}$ and, therefore $\frac{\partial (\Psi^*F)}{\partial t} \in \sqrt{J'}$, as an ideal in 	$\mathcal{O}_{d+1,0}$. Then, $\frac{\partial (\Psi^*F)}{\partial t}|_{V(J')}\equiv 0$ and thus $\Psi^*F$ does not depend of $t$ in $J'$, therefore $\Psi^*F|_{J'}\equiv 0$. Hence, $f_t(x)=F(t,x)=F(\psi(t,0))=\psi^*F(t,0)=0$.

Hence, the critical set of $f_t$ is in $f_t^{-1}(0)$. Therefore, by Corollary \ref{conj singular da funcao}, $S(f_t)=\{0\}$.
	
	 \end{proof}

\section*{Acknowledgments}
	
We are grateful to Gert-Martin Greuel for all the comments and suggestions in a previous version of the text.

\bibliographystyle{amsplain}
\addcontentsline{toc}{chapter}{Referências Bibliográficas}

\end{document}